\documentclass[11pt]{amsart}

\author[C.~Sanna]{Carlo Sanna$^\dagger$}
\address{\parbox{\linewidth}{
Politecnico di Torino, Department of Mathematical Sciences\\
Corso Duca degli Abruzzi 24, 10129 Torino, Italy\\[-8pt]}}
\email{carlo.sanna.dev@gmail.com}
\thanks{$\dagger\,$C.~Sanna is a member of GNSAGA of INdAM and of CrypTO, the group of Cryptography and Number~Theory of Politecnico di Torino}

\keywords{asymptotic formula; least common multiple}
\subjclass[2010]{Primary: 11B39, Secondary: 11B37, 11N37.}
\title{On the least common multiple of shifted powers}

\usepackage{amsmath}
\usepackage{amssymb}
\usepackage{amsthm}
\usepackage{booktabs}
\usepackage{geometry}
\geometry{left=1.15in, right=1.15in, top=.72in, bottom=.72in}
\usepackage{cite}
\usepackage{color}
\usepackage{hyperref}
\usepackage{enumerate}
\hypersetup{colorlinks=true}
\allowdisplaybreaks

\newtheorem{theorem}{Theorem}[section]

\newtheorem{lemma}[theorem]{Lemma}
\theoremstyle{remark}
\newtheorem{remark}{Remark}[section]
\newtheorem{question}{Question}[section]

\DeclareMathOperator{\lcm}{lcm}
\DeclareMathOperator{\Li}{Li}

\uchyph=0

\begin{document}

\begin{abstract}
Let $a \geq 2$ be an integer.
We prove that for every periodic sequence $(s_n)_{n \geq 1}$ in $\{-1, +1\}$ there exists an effectively computable rational number $C_\mathbf{s} > 0$ such that
\begin{equation*}
\log\lcm(a + s_1, a^2 + s_2, \dots, a^n + s_n) \sim C_\mathbf{s} \cdot \frac{\log a}{\pi^2} \cdot n^2 ,
\end{equation*}
as $n \to +\infty$, where $\lcm$ denotes the least common multiple.

Furthermore, we show that if $(s_n)_{n \geq 1}$ is a sequence of independent and uniformly distributed random variables in $\{-1, +1\}$ then
\begin{equation*}
\log\lcm(a + s_1, a^2 + s_2, \dots, a^n + s_n) \sim 6 \Li_2\!\big(\tfrac1{2}\big) \cdot \frac{\log a}{\pi^2} \cdot n^2 ,
\end{equation*}
with probability $1 - o(1)$, as $n \to +\infty$, where $\Li_2$ is the dilogarithm function.
\end{abstract}

\maketitle

\section{Introduction}

Let $(F_n)_{n \geq 1}$ be the sequence of Fibonacci numbers, defined recursively by $F_1 = 1$, $F_2 = 1$, and $F_{n + 2} = F_{n + 1} + F_n$, for every integer $n \geq 1$.
Matiyasevich and Guy~\cite{MR1712797} proved that
\begin{equation*}
\log \lcm (F_1, F_2, \dots, F_n) \sim \frac{3 \log\!\left(\tfrac{1 + \sqrt{5}}{2}\right)}{\pi^2} \cdot n^2 ,
\end{equation*}
as $n \to +\infty$, where $\lcm$ denotes the least common multiple.
This result was generalized to Lucas sequences, Lehmer sequences, and other sequences with special divisibility properties~\cite{MR1077711, MR1242715, MR1394375, MR3150887, MR1078087, MR1229668, MR993902, MR1114366}.
In~particular, for every integer $a \geq 2$ we have
\begin{equation}\label{equ:plus-formula}
\log\lcm(a - 1, a^2 - 1, \dots, a^n - 1) \sim \frac{3 \log a}{\pi^2} \cdot n^2
\end{equation}
and
\begin{equation}\label{equ:minus-formula}
\log\lcm(a + 1, a^2 + 1, \dots, a^n + 1) \sim \frac{4 \log a}{\pi^2} \cdot n^2 ,
\end{equation}
as $n \to +\infty$.
Precisely, \eqref{equ:plus-formula} follows from~\cite[Lemma~3]{MR993902} applied to the Lucas sequence $\left(\frac{a^n - 1}{a - 1}\right)_{n \geq 1}$; while \eqref{equ:minus-formula} follows from~\cite[Th\'eor\`eme]{MR1078087} applied to the companion Lucas sequence $(a^n + 1)_{n \geq 1}$.

We generalize \eqref{equ:plus-formula} and \eqref{equ:minus-formula} by giving asymptotic formulas for least common multiples of sequences of \emph{shifted powers} $(a^n + s_n)_{n \geq 1}$, where $(s_n)_{n \geq 1}$ is a sequence of \emph{shifts} in $\{-1, +1\}$.
This is somehow similar to a previous work of the author~\cite{PreprintSanna0}, in which least common multiples of the sequence of shifted Fibonacci numbers $(F_n + s_n)_{n \geq 1}$ were studied.

Our first result regards periodic sequences of shifts.

\begin{theorem}\label{thm:periodic}
Let $a \geq 2$ be an integer and let $\mathbf{s} = (s_n)_{n \geq 1}$ be a periodic sequence in $\{-1, +1\}$.
Then there exists an effectively computable rational number $C_\mathbf{s} > 0$ such that
\begin{equation}\label{equ:periodic}
\log\lcm(a + s_1, a^2 + s_2, \dots, a^n + s_n) \sim C_\mathbf{s} \cdot \frac{\log a}{\pi^2} \cdot n^2 ,
\end{equation}
as $n \to +\infty$.
\end{theorem}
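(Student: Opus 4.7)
The plan is to reduce $\log\lcm(a+s_1,\dots,a^N+s_N)$ to an Euler-totient sum over a set of cyclotomic indices determined by $\mathbf{s}$, via the factorization
\begin{equation*}
a^n + s_n = \prod_{d \in D_n} \Phi_d(a), \qquad D_n := \begin{cases} \{d : d \mid n\} & \text{if } s_n = -1,\\ \{d : d \mid 2n,\ d \nmid n\} & \text{if } s_n = +1, \end{cases}
\end{equation*}
together with the standard bound $\log\Phi_d(a) = \varphi(d)\log a + O_a(1)$ and the fact that $\gcd(\Phi_m(a),\Phi_n(a))$ is either $1$ or a single prime. Setting $\mathcal{D}_N := \bigcup_{n \leq N} D_n \subseteq [1,2N]$, these combine to give
\begin{equation*}
\log\lcm(a+s_1,\dots,a^N+s_N) = \log a \cdot \sum_{d\in\mathcal{D}_N} \varphi(d) + O_a(N\log N),
\end{equation*}
so it remains to prove $\sum_{d \in \mathcal{D}_N} \varphi(d) \sim C_\mathbf{s}\, N^2/\pi^2$ for an effectively computable positive rational $C_\mathbf{s}$.

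Next I would use periodicity to turn membership in $\mathcal{D}_N$ into an explicit congruence condition. Writing $d = 2^j m$ with $m$ odd, the index $d$ lies in $\mathcal{D}_N$ iff either (i) there exists $k \geq 1$ with $dk \leq N$ and $s_{dk} = -1$, or (ii) $j \geq 1$ and there exists odd $k$ with $2^{j-1}mk \leq N$ and $s_{2^{j-1}mk} = +1$. Since $\mathbf{s}$ has period $T$, the sequence $(s_{dk})_k$ is determined by $dk \bmod T$, which, as $k$ varies, ranges over the subgroup $\gcd(d,T)\cdot(\mathbb{Z}/T\mathbb{Z})$. Hence, uniformly for $d \leq N/T$, whether (i) or (ii) holds depends only on $\gcd(d,T)$, on $j$, and on an explicit Boolean combination of the values of $\mathbf{s}$ on one period. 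Consequently $\mathcal{D}_N \cap [1,N/T]$ is a finite union of arithmetic progressions modulo $2T$, written down explicitly from $\mathbf{s}$.

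To obtain the asymptotic, I would apply to each of these progressions the classical estimate
\begin{equation*}
\sum_{\substack{d \leq M \\ d \equiv r \pmod{Q}}} \varphi(d) \sim \alpha(r,Q)\cdot M^2 \qquad (M \to \infty),
\end{equation*}
where $\alpha(r,Q)$ is an explicit rational obtainable by restriction of the Dirichlet series $\sum_d \varphi(d)/d^s = \zeta(s-1)/\zeta(s)$ to the progression. Summing over the progressions (with $M = N$ when $d$ is odd and $M = 2N$ when $d$ is even), and absorbing the boundary range $N/T < d \leq 2N$ as a lower-order term, yields $\sum_{d\in\mathcal{D}_N}\varphi(d) \sim C_\mathbf{s}\, N^2/\pi^2$ with $C_\mathbf{s}$ a rational combination of the $\alpha(r,Q)$. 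Positivity of $C_\mathbf{s}$ follows because $\mathcal{D}_N$ always contains a positive-density subset of the ranges covered by the constant-shift cases \eqref{equ:plus-formula} and \eqref{equ:minus-formula}.

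The main obstacle is the combinatorial bookkeeping in the middle step: for even $d = 2^j m$ the clauses (i) and (ii) may simultaneously trigger, and they impose periodicity conditions on different subgroups of $\mathbb{Z}/T\mathbb{Z}$, so some care is required to merge the cases without double-counting and to present $C_\mathbf{s}$ in closed form. Two useful sanity checks are the constant sequences $s_n \equiv -1$ and $s_n \equiv +1$, which must recover $C_\mathbf{s} = 3$ and $C_\mathbf{s} = 4$, reproducing \eqref{equ:plus-formula} and \eqref{equ:minus-formula}.
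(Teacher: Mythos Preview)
Your overall strategy coincides with the paper's: reduce via the cyclotomic factorization to a totient sum over $\mathcal{D}_N$, describe $\mathcal{D}_N$ in terms of arithmetic progressions modulo $2T$, and apply the standard asymptotic for $\sum_{d\equiv r\,(Q),\,d\le M}\varphi(d)$. The gap is in the claim that the boundary range $N/T<d\le 2N$ can be ``absorbed as a lower-order term'': it cannot, because it carries a $\Theta(N^2)$ share of the totient sum. Concretely, take $T=2$ with $\mathbf{s}=(+1,-1,+1,-1,\dots)$. On $[1,N/2]$ every residue class mod $4$ meets $\mathcal{D}_N$, so your recipe extrapolates the odd classes to $[1,N]$ and both even classes to $[1,2N]$, yielding $C_\mathbf{s}=6$. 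But in fact the odd $d$'s in $\mathcal{D}_N$ run only up to $N/2$ (an odd $d$ can appear only via $d\mid n$ with $s_n=-1$, i.e.\ $n$ even, forcing $n\ge 2d$), and the class $d\equiv 0\pmod 4$ runs only up to $N$; the correct constant is $C_\mathbf{s}=3$.

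The missing idea, supplied in the paper by Lemma~\ref{lem:Dunions}, is that $\mathcal{D}_N$ is \emph{exactly} a union of truncated progressions $\{d\le \theta_t N: d\equiv t\pmod{2T}\}$, where each residue class $t$ carries its own rational cutoff $\theta_t$ determined by $\mathbf{s}$; one then applies the totient asymptotic with upper limit $\theta_t N$ separately for each $t$, obtaining $C_\mathbf{s}=3\sum_t c_{t,2T}\,\theta_t^{2}$. A minor side remark: the claimed error $O_a(N\log N)$ in your reduction step is optimistic---the paper establishes only $O_a(N^2/\log N)$ from the small-prime contribution (Lemma~\ref{lem:logellsn})---but either bound is $o(N^2)$ and does not affect the asymptotic.
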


We computed the constant $C_\mathbf{s}$ for periodic sequences with short period, see Table~\ref{tab:sCs}.

\begin{table}[h]
\label{tab:sCs}
\caption{Values of $C_\mathbf{s}$ for periodic sequences $\mathbf{s}$ of period at most $5$.}
\centering
\begin{tabular}{cc|cc|cc|cc}
  \toprule
  $\mathbf{s}$ & $C_{\mathbf{s}}$ & $\mathbf{s}$ & $C_{\mathbf{s}}$ & $\mathbf{s}$ & $C_{\mathbf{s}}$ & $\mathbf{s}$ & $C_{\mathbf{s}}$ \\\hline
  \texttt{-} & $3$ & \texttt{-+--} & $27/8$ & \texttt{--+-+} & $319/96$ & \texttt{+--+-} & $733/216$\rule{0pt}{2.6ex}\\
  \texttt{+} & $4$ & \texttt{-++-} & $125/36$ & \texttt{--++-} & $487/144$ & \texttt{+--++} & $769/216$\\
  \texttt{-+} & $4$ & \texttt{-+++} & $38/9$ & \texttt{--+++} & $7687/2160$ & \texttt{+-+--} & $487/144$\\
  \texttt{+-} & $3$ & \texttt{+---} & $3$ & \texttt{-+---} & $101/32$ & \texttt{+-+-+} & $7687/2160$\\
  \texttt{--+} & $13/4$ & \texttt{+--+} & $7/2$ & \texttt{-+--+} & $319/96$ & \texttt{+-++-} & $2123/576$\\
  \texttt{-+-} & $105/32$ & \texttt{+-++} & $7/2$ & \texttt{-+-+-} & $487/144$ & \texttt{+-+++} & $2219/576$\\
  \texttt{-++} & $173/48$ & \texttt{++--} & $125/36$ & \texttt{-+-++} & $7687/2160$ & \texttt{++---} & $487/144$\\
  \texttt{+--} & $105/32$ & \texttt{++-+} & $38/9$ & \texttt{-++--} & $733/216$ & \texttt{++--+} & $7687/2160$\\
  \texttt{+-+} & $173/48$ & \texttt{+++-} & $27/8$ & \texttt{-++-+} & $769/216$ & \texttt{++-+-} & $2123/576$\\
  \texttt{++-} & $47/12$ & \texttt{----+} & $19/6$ & \texttt{-+++-} & $2123/576$ & \texttt{++-++} & $2219/576$\\
  \texttt{---+} & $7/2$ & \texttt{---+-} & $101/32$ & \texttt{-++++} & $2219/576$ & \texttt{+++--} & $2123/576$\\
  \texttt{--+-} & $3$ & \texttt{---++} & $319/96$ & \texttt{+----} & $101/32$ & \texttt{+++-+} & $2219/576$\\
  \texttt{--++} & $7/2$ & \texttt{--+--} & $101/32$ & \texttt{+---+} & $319/96$ & \texttt{++++-} & $39/10$\\
  \bottomrule
\end{tabular}
\end{table}

Our second result is an almost sure asymptotic formula for random sequences of shifts (see~\cite{MR4091939, MR4220046, MR3239153, MR4009436} for similar results on least common multiples of random sequences). 

\begin{theorem}\label{thm:random}
Let $a \geq 2$ be an integer and let $(s_n)_{n \geq 1}$ be a sequence of independent and uniformly distributed random variables in $\{-1, +1\}$.
Then
\begin{equation}\label{equ:random}
\log\lcm(a + s_1, a^2 + s_2, \dots, a^n + s_n) \sim 6 \Li_2\!\big(\tfrac1{2}\big) \cdot \frac{\log a}{\pi^2} \cdot n^2 ,
\end{equation}
with probability $1 - o(1)$, as $n \to +\infty$, where $\Li_2(z) := \sum_{k = 1}^\infty z^k / k^2$ is the dilogarithm function. 
\end{theorem}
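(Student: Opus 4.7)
The plan is to express $L_n := \lcm(a + s_1, a^2 + s_2, \dots, a^n + s_n)$ as a near-product of cyclotomic polynomial values, compute $E[\log L_n]$ by a Dirichlet-style summation, and promote the answer to an in-probability statement via a bounded-difference variance estimate.

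\textbf{Step 1: cyclotomic encoding.} Using $a^k - 1 = \prod_{d \mid k}\Phi_d(a)$ and $a^k + 1 = \prod_{d \mid 2k,\, d\nmid k}\Phi_d(a)$, I set
\[
S_n := \{\,d \ge 1 : \Phi_d(a) \mid a^k + s_k \text{ for some } k \le n\,\}.
\]
A short case analysis shows that for $d$ odd, $d \in S_n$ iff $s_{jd} = -1$ for some $1 \le j \le \lfloor n/d \rfloor$, and for $d = 2e$ even, $d \in S_n$ iff $s_{je} = (-1)^{j+1}$ for some $1 \le j \le \lfloor n/e \rfloor$. M\"obius inversion applied to $\log(a^e - 1) = e\log a + \log(1 - a^{-e})$ gives $|\log\Phi_d(a) - \phi(d)\log a| \le 2/(a-1)$ uniformly in $d$, and the standard Bang--Zsygmondy-type bound (a prime shared by $\Phi_d(a)$ and $\Phi_{d'}(a)$ must divide $dd'$) yields
\[
\log L_n = \log a \cdot \Sigma_n + O(n\log n), \qquad \Sigma_n := \sum_{d \in S_n}\phi(d).
\]

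\textbf{Step 2: expected value.} By independence of the $s_k$, $P(d \in S_n) = 1 - 2^{-\lfloor n/d \rfloor}$ for $d$ odd with $d \le n$, and $P(d \in S_n) = 1 - 2^{-\lfloor n/e \rfloor}$ for $d = 2e$ with $e \le n$. Using $\phi(2e) = \phi(e)$ for $e$ odd and $\phi(2e) = 2\phi(e)$ for $e$ even, the contributions of odd and even $d$ merge into
\[
E[\Sigma_n] = 2 \sum_{d = 1}^{n} \phi(d)\bigl(1 - 2^{-\lfloor n/d \rfloor}\bigr).
\]
I split this as $2\sum_{d \le n}\phi(d) - 2\sum_{d \le n}\phi(d)\,2^{-\lfloor n/d \rfloor}$, group the second sum by the value $j := \lfloor n/d \rfloor$, and apply partial summation with $\sum_{d \le x}\phi(d) = \tfrac{3}{\pi^2}x^2 + O(x\log x)$ to reduce it to
\[
\frac{3n^2}{\pi^2}\sum_{j \ge 1} 2^{-j}\!\left(\frac{1}{j^2} - \frac{1}{(j+1)^2}\right) + O(n\log n) = \frac{3n^2}{\pi^2}\bigl(1 - \Li_2(\tfrac{1}{2})\bigr) + O(n\log n),
\]
where I used $\sum_{j \ge 1} 2^{-j}/j^2 = \Li_2(\tfrac{1}{2})$ and $\sum_{j \ge 2} 2^{-(j-1)}/j^2 = 2\bigl(\Li_2(\tfrac{1}{2}) - \tfrac{1}{2}\bigr)$. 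Subtracting gives $E[\Sigma_n] = \tfrac{6\Li_2(1/2)}{\pi^2} n^2 + O(n\log n)$, hence by Step~1 the expectation matches the target constant:
\[
E[\log L_n] = \frac{6\,\Li_2(1/2)\log a}{\pi^2}\, n^2 + O(n\log n).
\]

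\textbf{Step 3: concentration and main obstacle.} If $L_n^{(i)}$ denotes the LCM with $s_i$ replaced by an independent copy $s_i'$, both $L_n$ and $L_n^{(i)}$ lie between $M := \lcm_{k \ne i}(a^k + s_k)$ and $M \cdot (a^i + 1)$, so $|\log L_n - \log L_n^{(i)}| \le \log(a^i + 1) \le i\log a + 1$. The Efron--Stein inequality then gives
\[
\mathrm{Var}(\log L_n) \le \tfrac{1}{2}\sum_{i=1}^{n}(i\log a + 1)^2 = O(n^3),
\]
and Chebyshev yields $P(|\log L_n - E[\log L_n]| > \varepsilon n^2) = O(1/n) \to 0$, which combined with Step~2 is \eqref{equ:random} with probability $1 - o(1)$. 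The serious work is arithmetic rather than probabilistic: the passage from $L_n$ to $\prod_{d \in S_n}\Phi_d(a)$ with an error only of size $O(n\log n)$ rests on controlling the (small but nonzero) overlap between distinct cyclotomic values. The identification of $6\Li_2(1/2)$ as the random-case constant, via the telescoping of $2^{-j}(1/j^2 - 1/(j+1)^2)$, is the characteristic new ingredient; the variance bound is then comfortable since $E[\log L_n] = \Theta(n^2)$ dominates the fluctuation scale $\sqrt{\mathrm{Var}} = O(n^{3/2})$.
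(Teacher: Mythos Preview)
Your argument is correct and reaches the same constant, but the route differs from the paper's in two places worth noting.

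For the expectation, the paper splits $\sum_{d\le 2n}\varphi(d)\mathbb{E}[I_\mathbf{s}(n,d)]$ into the residue classes $d\in\mathcal{A}_{1,2}(n)$ and $d\in\mathcal{A}_{2,2}(2n)$ and quotes a prepackaged lemma for $\sum_{n\in\mathcal{A}_{r,m}(x)}\varphi(n)(1-z^{\lfloor x/n\rfloor})$. Your merge of the odd and even ranges into the single clean identity $E[\Sigma_n]=2\sum_{d\le n}\varphi(d)(1-2^{-\lfloor n/d\rfloor})$ followed by Dirichlet-hyperbola bookkeeping is more self-contained and arguably more transparent about where $\Li_2(\tfrac12)$ comes from.

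For concentration, the paper computes $\operatorname{Cov}(I_\mathbf{s}(n,d_1),I_\mathbf{s}(n,d_2))$ explicitly (a nontrivial case analysis on $\nu_2(d_1),\nu_2(d_2)$) and then bounds the resulting double sum via $\sum_{[d_1,d_2]\le n}(d_1,d_2)\ll n^2$. Your Efron--Stein bound, using only that flipping $s_i$ moves $\log L_n$ by at most $\log(a^i+1)$, sidesteps all of that and gives the same $O(n^3)$ variance with a one-line argument; this is a genuine simplification.

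One inaccuracy to flag: in Step~1 you assert $\log L_n=\log a\cdot\Sigma_n+O(n\log n)$, but the Bang--Zsygmondy observation by itself only tells you the overlap primes are at most $2n$, and the crude bound on their total contribution is $\pi(2n)\cdot(n+1)\log a\ll n^2/\log n$, which is what the paper proves. Getting $O(n\log n)$ would require controlling $\sum_{p\le 2n}\nu_p(a^{\mathrm{ord}_p(a)}-1)\log p$ more sharply than your sketch indicates. This does not damage the proof, since $o(n^2)$ is all that is needed for both the expectation formula and the final asymptotic, but the stated error term is stronger than what you have actually justified.
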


\begin{remark}
It is known that $\Li_2\!\big(\tfrac1{2}\big) = \big(\pi^2 - 6(\log 2)^2\big) / 12$ (see, e.g.,~\cite{MR2290758}), but in~\eqref{equ:random} we preferred to keep explicit the factor $6 \Li_2\!\big(\tfrac1{2}\big)$ in order to ease the comparison with~\eqref{equ:plus-formula}, \eqref{equ:minus-formula}, and~\eqref{equ:periodic}. 
\end{remark}

We leave the following questions to the interested reader:
\begin{question}\label{que:limit}
Is there a simple characterization of the set $\mathcal{E}$ of sequences $\mathbf{s} = (s_n)_{n \geq 1}$ in $\{-1, +1\}$ such that the limit
\begin{equation*}
L(\mathbf{s}) := \lim_{n \to +\infty} \frac{\log\lcm(a + s_1, a^2 + s_2, \dots, a^n + s_n)}{(\log a / \pi^2) \cdot n^2}
\end{equation*}
exists? (It follows from Lemma~\ref{lem:logellsn} below that $L(\mathbf{s})$ does not depend on $a$.)
\end{question}

\begin{question}
What is the image $L(\mathcal{E})$?
\end{question}

\begin{question}
Does (an appropriate normalization of) the random variable on the left-hand side of~\eqref{equ:random} converge to some known distribution?
\end{question}

\section{Notation}

We employ the Landau--Bachmann ``Big Oh'' and ``little oh'' notations $O$ and $o$, as well as the associated Vinogradov symbol $\ll$, with their usual meanings.
For real random variables $X$ and $Y$, depending on $n$, we say that ``$X \sim Y$ with probability $1 - o(1)$ as $n \to +\infty$'' if for every $\varepsilon > 0$ we have $\mathbb{P}\big[|X - Y| > \varepsilon|Y|\big] = o_\varepsilon(1)$ as $n \to +\infty$.
We let $[m, n]$ and $(m, n)$ denote the least common multiple and the greatest common divisor, respectively, of the two integers $m$ and $n$. 
We reserve the letter $p$ for prime numbers, and we let $\nu_p$ denote the $p$-adic valuation.
We write $\varphi(n)$ and $\tau(n)$ for the Euler function and the number of positive divisors, respectively, of a natural number $n$.

\section{Preliminaries}

Hereafter, let $a \geq 2$ be a fixed integer.
Define the \emph{$n$th cyclotomic polynomial} by
\begin{equation}\label{equ:cyclo}
\Phi_n(X) := \prod_{\substack{1 \,\leq\, k \,\leq\, n \\ (n, k) \,=\, 1}} \left(X - \mathrm{e}^{\frac{2 \pi \mathbf{i} k}{n}}\right) ,
\end{equation}
for every integer $n \geq 1$.
It is well known that $\Phi_n(X) \in \mathbb{Z}[X]$.
Moreover, from~\eqref{equ:cyclo} we get that
\begin{equation}\label{equ:shiftedan}
a^n - 1 = \prod_{d \,\in\, \mathcal{D}^-(n)} \Phi_d(a) \quad \text{ and } \quad a^n + 1 = \prod_{d \,\in\, \mathcal{D}^+(n)} \Phi_d(a) ,
\end{equation}
for every integer $n \geq 1$, where $\mathcal{D}^-(n) := \{d \in \mathbb{N} : d \mid n\}$ and $\mathcal{D}^+(n) := \mathcal{D}(2n) \setminus \mathcal{D}(n)$.

We need two results about the sequence of integers $(\Phi_d(a))_{d \in \mathbb{N}}$.

\begin{lemma}\label{lem:gcdPhi}
We have $(\Phi_m(a), \Phi_n(a)) \mid m$, for all integers $m > n \geq 1$.
\end{lemma}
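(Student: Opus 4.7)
The plan is to prove that $g := (\Phi_m(a), \Phi_n(a))$ divides $m/d$, where $d := (m, n)$; since $d \mid m$, this immediately gives $g \mid m$.

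I would first establish two divisibilities. On the one hand, from $\Phi_m(a) \mid a^m - 1$ and $\Phi_n(a) \mid a^n - 1$ (both are consequences of the identity $a^\ell - 1 = \prod_{e \mid \ell} \Phi_e(a)$), one obtains
\[
g \mid (a^m - 1,\, a^n - 1) = a^d - 1 .
\]
On the other hand, since $d \mid m$ and $d \leq n < m$, comparing the same product identity for $\ell = m$ and $\ell = d$ yields
\[
\frac{X^m - 1}{X^d - 1} = \prod_{\substack{e \,\mid\, m \\ e \,\nmid\, d}} \Phi_e(X) \in \mathbb{Z}[X] ,
\]
and the index $e = m$ belongs to the product on the right. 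Hence $\Phi_m(X)$ divides $(X^m - 1)/(X^d - 1)$ in $\mathbb{Z}[X]$, and setting
\[
B := \frac{a^m - 1}{a^d - 1} = 1 + a^d + a^{2d} + \cdots + a^{(m/d - 1)d} ,
\]
we deduce $\Phi_m(a) \mid B$, and therefore $g \mid B$.

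To finish, I would reduce $B$ modulo $a^d - 1$: since $a^{id} \equiv 1 \pmod{a^d - 1}$ for every $i \geq 0$, the sum defining $B$ satisfies $B \equiv m/d \pmod{a^d - 1}$. Combining this with $g \mid a^d - 1$ and $g \mid B$, one concludes that $g \mid m/d$, and hence $g \mid m$. The argument is elementary and no serious obstacle is anticipated; the only non-routine observation is that $\Phi_m(X)$ appears as a factor of the polynomial $(X^m - 1)/(X^d - 1) \in \mathbb{Z}[X]$, which is immediate from the cyclotomic product formula because $m$ is a divisor of $m$ that does not divide $d$.
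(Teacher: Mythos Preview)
Your argument is correct and entirely self-contained. The paper's proof takes a different route: for $n > 1$ it simply invokes \cite[Theorem~3.1]{MR1416242}, and only the case $n = 1$ is handled directly, by noting that $g \mid \Phi_1(a) = a - 1$ forces $a \equiv 1 \pmod{g}$, whence $g \mid \Phi_m(a) \mid (a^m - 1)/(a - 1) = 1 + a + \cdots + a^{m-1} \equiv m \pmod{g}$. Your proof is exactly the natural generalization of that $n = 1$ computation: you replace $a - 1$ by $a^d - 1$ with $d = (m,n)$, use the standard identity $(a^m - 1,\, a^n - 1) = a^d - 1$, and observe that $\Phi_m(X)$ still divides the quotient $(X^m - 1)/(X^d - 1)$ because $d < m$ ensures $m \nmid d$. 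The payoff is a uniform, elementary proof that avoids any appeal to the literature and in fact yields the sharper conclusion $g \mid m/d$.
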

\begin{proof}
If $n > 1$ then the claim follows from~\cite[Theorem~3.1]{MR1416242}.
If $n = 1$ then notice that $\Phi_n(a) = a - 1$ and so $a \equiv 1 \pmod d$, where $d := (\Phi_m(a), \Phi_n(a))$.
Therefore,
\begin{equation*}
d \mid \Phi_m(a) \mid \frac{a^m - 1}{a - 1} \equiv 1 + a + a^2 + \cdots + a^{m-1} \equiv m \pmod d , 
\end{equation*}
and consequently $d$ divides $m$.
\end{proof}

\begin{lemma}\label{lem:asympPhi}
We have $\log \Phi_n(a) = \varphi(n) \log a + O_a(1)$, for every integer $n \geq 1$.
\end{lemma}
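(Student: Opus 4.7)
The plan is to apply Möbius inversion to the factorization $a^n - 1 = \prod_{d \mid n} \Phi_d(a)$ (the first identity in~\eqref{equ:shiftedan}). Taking logarithms gives $\log(a^n - 1) = \sum_{d \mid n} \log \Phi_d(a)$, and Möbius inversion then yields
$$\log \Phi_n(a) = \sum_{d \mid n} \mu(n/d)\, \log(a^d - 1).$$
I then split $\log(a^d - 1) = d \log a + \log(1 - a^{-d})$, which separates the right-hand side into a main term and an error term. The main term equals $\log a \cdot \sum_{d \mid n} \mu(n/d)\, d = \varphi(n) \log a$, by the classical identity $\varphi = \mu \ast \mathrm{id}$ (equivalently, the Möbius inversion of $n = \sum_{d \mid n} \varphi(d)$).

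For the error term $\sum_{d \mid n} \mu(n/d) \log(1 - a^{-d})$, I use the elementary inequality $|\log(1 - a^{-d})| \leq 2 a^{-d}$, which holds for all $a \geq 2$ and $d \geq 1$ (since $a^{-d} \leq 1/2$ and $-\log(1-x) \leq x/(1-x)$). Dropping the divisibility constraint and summing the geometric series, I obtain
$$\left|\sum_{d \mid n} \mu(n/d) \log(1 - a^{-d})\right| \leq \sum_{d \geq 1} 2 a^{-d} = \frac{2}{a - 1},$$
which is $O_a(1)$. Combining the two contributions gives the claim.

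The argument is entirely formal, and I do not expect any genuine obstacle; the only point requiring care is the uniformity of the error bound in $n$, which is guaranteed by the geometric decay of $a^{-d}$ together with the trivial bound $|\mu(n/d)| \leq 1$. An alternative route would be to start directly from~\eqref{equ:cyclo}, write $\log|a - \mathrm{e}^{2\pi\mathbf{i}k/n}| = \log a + \log|1 - \mathrm{e}^{2\pi\mathbf{i}k/n}/a|$, and sum the latter contribution using the power series of $\log(1 - z)$ combined with the Ramanujan-sum bound $|c_n(j)| \leq (j,n) \leq j$; however, the Möbius-inversion approach above seems considerably shorter and avoids any manipulation of complex logarithms.
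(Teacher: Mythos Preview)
Your argument is correct and complete. The paper itself does not give a proof here but merely cites \cite[Lemma~2.1(iii)]{MR4003803}; you instead supply a short self-contained argument via M\"obius inversion of the identity $\log(a^n-1)=\sum_{d\mid n}\log\Phi_d(a)$, isolating the main term $\varphi(n)\log a$ from $\sum_{d\mid n}\mu(n/d)\,d$ and bounding the remainder $\sum_{d\mid n}\mu(n/d)\log(1-a^{-d})$ by the convergent geometric series $\sum_{d\ge 1}2a^{-d}=2/(a-1)$. This is the standard elementary route and is very likely what the cited reference does as well, so the only real difference is that you have written it out rather than pointing elsewhere. Your side remark about the alternative approach via~\eqref{equ:cyclo} and Ramanujan sums is also fine but, as you note, unnecessary here.
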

\begin{proof}
See~\cite[Lemma~2.1(iii)]{MR4003803}.
\end{proof}

For every sequence $\mathbf{s} = (s_n)_{n \geq 1}$ in $\{-1, +1\}$, let us define
\begin{equation*}
\ell_{a,\mathbf{s}}(n) := \lcm(a + s_1, a^2 + s_2, \dots, a^n + s_n) 
\end{equation*}
and
\begin{equation*}
\mathcal{L}_\mathbf{s}(n) := \bigcup_{k \,\leq\, n} \mathcal{D}^{(s_k)}(k) ,
\end{equation*}
for all integers $n \geq 1$.

The next lemma will be fundamental in the proofs of Theorem~\ref{thm:periodic} and Theorem~\ref{thm:random}.

\begin{lemma}\label{lem:logellsn}
We have
\begin{equation*}
\log \ell_{a,\mathbf{s}}(n) = \sum_{d \,\in\, \mathcal{L}_\mathbf{s}(n)} \varphi(d) \log a + O_a\!\left(\frac{n^2}{\log n}\right) ,
\end{equation*}
for every integer $n \geq 2$.
\end{lemma}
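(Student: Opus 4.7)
The plan is to approximate $\ell_{a,\mathbf{s}}(n)$ by the product $P_n := \prod_{d \,\in\, \mathcal{L}_\mathbf{s}(n)} \Phi_d(a)$, estimate $P_n$ via Lemma~\ref{lem:asympPhi}, and control the discrepancy $Q_n := P_n/\ell_{a,\mathbf{s}}(n)$ by a prime-by-prime analysis powered by two applications of Lemma~\ref{lem:gcdPhi}.

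First, I would use the cyclotomic factorization~\eqref{equ:shiftedan}, which writes each $a^k + s_k$ as $\prod_{d \,\in\, \mathcal{D}^{(s_k)}(k)} \Phi_d(a)$. Since $\mathcal{D}^{(s_k)}(k) \subseteq \mathcal{L}_\mathbf{s}(n)$ for every $k \leq n$, this implies $(a^k + s_k) \mid P_n$, so $\ell_{a,\mathbf{s}}(n) \mid P_n$ and $Q_n$ is a positive integer. Applying Lemma~\ref{lem:asympPhi} to each factor, together with the crude bound $|\mathcal{L}_\mathbf{s}(n)| \leq \sum_{k \leq n} \tau(2k) \ll n \log n$, gives
\begin{equation*}
\log P_n = \sum_{d \,\in\, \mathcal{L}_\mathbf{s}(n)} \varphi(d) \log a + O_a(n \log n),
\end{equation*}
so it remains to prove $\log Q_n \ll n^2 / \log n$.

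To that end, I fix a prime $p$, set $r_p := \#\{d \in \mathcal{L}_\mathbf{s}(n) : p \mid \Phi_d(a)\}$, and list the corresponding valuations $\nu_p(\Phi_d(a))$ in decreasing order as $e_p^{(1)} \geq \cdots \geq e_p^{(r_p)}$. Taking $d^\ast \in \mathcal{L}_\mathbf{s}(n)$ achieving $e_p^{(1)}$ and any $k \leq n$ with $d^\ast \in \mathcal{D}^{(s_k)}(k)$, I obtain $\Phi_{d^\ast}(a) \mid (a^k + s_k) \mid \ell_{a,\mathbf{s}}(n)$, so $\nu_p(\ell_{a,\mathbf{s}}(n)) \geq e_p^{(1)}$ and $\nu_p(Q_n) \leq \sum_{i \geq 2} e_p^{(i)}$. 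For each $i \geq 2$, Lemma~\ref{lem:gcdPhi} applied to the two indices giving $e_p^{(1)}$ and $e_p^{(i)}$ (each at most $2n$) yields $p^{e_p^{(i)}} \leq 2n$, so $e_p^{(i)} \log p \leq \log(2n)$; summing, $\nu_p(Q_n) \log p \leq (r_p - 1)^+ \log(2n)$.

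A second use of Lemma~\ref{lem:gcdPhi} shows that whenever $d_1 < d_2$ are two indices in $\mathcal{L}_\mathbf{s}(n)$ with $p$ dividing both $\Phi_{d_1}(a)$ and $\Phi_{d_2}(a)$, then $p \mid d_2$. Hence all but possibly the smallest of the $r_p$ relevant indices are multiples of $p$ not exceeding $2n$, so $r_p - 1 \leq 2n/p$ (and in particular $(r_p - 1)^+ = 0$ for $p > 2n$). Summing over primes and using Mertens' estimate,
\begin{equation*}
\log Q_n \leq \log(2n) \sum_{p \,\leq\, 2n} (r_p - 1)^+ \leq 2n \log(2n) \sum_{p \,\leq\, 2n} \frac{1}{p} \ll n (\log n)(\log \log n),
\end{equation*}
which is $O(n^2/\log n)$, completing the proof. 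The main obstacle will be precisely this overcounting bound: the naive estimate $\sum_{d} \omega(\Phi_d(a)) \ll n^2 \log a$ is far too crude, and the two uses of Lemma~\ref{lem:gcdPhi}---one to bound the redundant valuations $e_p^{(i)}$, the other to bound the multiplicity $r_p$---are both essential.
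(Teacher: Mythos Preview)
Your argument is correct, but it takes a different route from the paper's. The paper splits according to the size of the prime: by Lemma~\ref{lem:gcdPhi}, the numbers $\Phi_1(a),\dots,\Phi_{2n}(a)$ are pairwise coprime away from primes $p\le 2n$, so for primes $p>2n$ the lcm $\ell_{a,\mathbf{s}}(n)$ and the product $P_n$ have identical $p$-parts; the entire contribution of primes $p\le 2n$ to $\log\ell_{a,\mathbf{s}}(n)$ is then bounded crudely by $\pi(2n)\cdot(n+1)\log a\ll_a n^2/\log n$, since any such prime power dividing some $a^k+s_k$ is at most $a^{n+1}$. You instead bound the full quotient $Q_n=P_n/\ell_{a,\mathbf{s}}(n)$ prime by prime, using Lemma~\ref{lem:gcdPhi} twice: once to cap each redundant exponent $e_p^{(i)}$ ($i\ge2$) by $\log(2n)/\log p$, and once to show $r_p-1\le 2n/p$. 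This yields $\log Q_n\ll n(\log n)(\log\log n)$, a much sharper discrepancy than the paper's $n^2/\log n$. The paper's argument is shorter and uses only the prime-counting bound; your argument is a bit more work but reveals that the true error in the cyclotomic approximation is essentially $n^{1+o(1)}$, so the $O_a(n^2/\log n)$ in the lemma is far from tight. (Incidentally, since $\mathcal{L}_\mathbf{s}(n)\subseteq[1,2n]$ you could replace your bound $|\mathcal{L}_\mathbf{s}(n)|\ll n\log n$ by the simpler $|\mathcal{L}_\mathbf{s}(n)|\le 2n$.)
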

\begin{proof}
Suppose that $p^v \mid\mid \ell_{a,\mathbf{s}}(n)$, for some prime number $p \leq 2n$ and some integer $v \geq 1$.
Then $p^v \mid a^k + s_k$ for some positive integer $k \leq n$, and consequently $p^v \leq a^{n + 1}$.
Therefore,
\begin{equation}\label{equ:smallprimes}
\log\!\Big(\prod_{\substack{p^v \,\mid\mid\, \ell_{a,\mathbf{s}}(n) \\ p \,\leq\, 2n}} p^v \Big) \leq \log\!\Big(\prod_{\substack{p^v \,\mid\mid\, \ell_{a,\mathbf{s}}(n) \\ p \,\leq\, 2n}} a^{n + 1} \Big) \leq \#\{p : p \leq 2n\} \cdot (n + 1) \log a \ll_a \frac{n^2}{\log n} , 
\end{equation}
since the number of primes not exceeding $2n$ is $O(n / \!\log n)$.

In light of Lemma~\ref{lem:gcdPhi}, the integers $\Phi_1(a), \dots, \Phi_{2n}(a)$ are pairwise coprime except for prime factors not exceeding $2n$.
Hence, the identities~\eqref{equ:shiftedan} together with~\eqref{equ:smallprimes} yield
\begin{align*}
\log \ell_{a,\mathbf{s}}(n) &= \log\!\Big(\prod_{d \,\in\, \mathcal{L}_\mathbf{s}(n)} \Phi_d(a) \Big) + O_a\!\left(\frac{n^2}{\log n}\right) \\
 &= \sum_{d \,\in\, \mathcal{L}_\mathbf{s}(n)} \varphi(d)\log a + O_a(\#\mathcal{L}_\mathbf{s}(n)) + O_a\!\left(\frac{n^2}{\log n}\right) ,
\end{align*}
where we used Lemma~\ref{lem:asympPhi}.
The claim follows since $\mathcal{L}_\mathbf{s}(n) \subseteq [1, 2n]$ and so $\#\mathcal{L}_\mathbf{s}(n) \leq 2n$.
\end{proof}

For all integers $r \geq 0$ and $m \geq 1$, and for every $x \geq 1$, let us define the arithmetic progression
\begin{equation*}
\mathcal{A}_{r, m}(x) := \big\{n \leq x : n \equiv r \!\!\!\!\pmod m\big\} .
\end{equation*}
We need an asymptotic formula for a sum of the Euler totient function over an arithmetic progression.

\begin{lemma}\label{lem:phisum}
Let $r, m$ be positive integers and let $z \in {[0, 1)}$.
Then we have
\begin{equation*}
\sum_{n \,\in\, \mathcal{A}_{r,m}(x)} \varphi(n)\big(1 - z^{\lfloor x / n \rfloor}\big) = \frac{3}{\pi^2} \cdot c_{r,m} \cdot \frac{(1 - z) \Li_2(z)}{z} \cdot x^2 + O_{r,m}\big(x (\log x)^2\big) ,
\end{equation*}
for every $x \geq 2$, where
\begin{equation*}
c_{r, m} := \frac1{m} \prod_{\substack{p \,\mid\, m \\ \!\!p \,\mid\, r}} \left(1 + \frac1{p}\right)^{-1} \prod_{\substack{p \,\mid\, m \\ \!\! p \,\nmid\, r}} \left(1 - \frac1{p^2}\right)^{-1} ,
\end{equation*}
while for $z = 0$ the factor involving $\Li_2(z)$ is meant to be equal to $1$, and the error term can be improved to $O_{r,m}(x \log x)$. 
\end{lemma}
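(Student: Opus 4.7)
The plan is to expand $1 - z^{\lfloor x/n\rfloor} = (1-z)\sum_{j=0}^{\lfloor x/n\rfloor - 1} z^j$ and switch the order of summation, using $\lfloor x/n\rfloor \geq j + 1 \iff n \leq x/(j+1)$. Setting $T_{r,m}(y) := \sum_{n \,\in\, \mathcal{A}_{r,m}(y)} \varphi(n)$, this rewrites the sum in the lemma as
\begin{equation*}
(1 - z) \sum_{j \,\geq\, 0} z^j \, T_{r, m}\!\left(\frac{x}{j + 1}\right) .
\end{equation*}
The core ingredient is the asymptotic
\begin{equation*}
T_{r,m}(y) = \frac{3}{\pi^2}\, c_{r,m}\, y^2 + O_{r,m}(y \log y) ,
\end{equation*}
which I would establish by Möbius inversion $\varphi(n) = n \sum_{d \mid n} \mu(d)/d$ followed by swapping sums: the inner sum runs over $n \leq y$ satisfying both $d \mid n$ and $n \equiv r \pmod m$, and these two congruences are simultaneously satisfiable iff $\gcd(d, m) \mid r$, in which case $n$ runs over a single residue class modulo $M := dm/\gcd(d, m)$, whose integers sum to $y^2/(2M) + O(y)$. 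Aggregating produces the leading coefficient $\frac{1}{2m} \sum_{d :\, \gcd(d,m) \mid r} \mu(d) \gcd(d,m)/d^2$, and a prime-by-prime evaluation — primes $p \nmid m$ supplying $1/\zeta(2) = 6/\pi^2$, primes $p \mid m$ supplying a local factor that depends on whether $p \mid r$ — identifies this coefficient with $\frac{3}{\pi^2} c_{r,m}$; the summed per-$d$ remainders contribute $O_{r,m}(y \log y)$ through $\sum_{d \leq y} 1/d \ll \log y$.

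Substituting this estimate into the rewritten sum produces the main term
\begin{equation*}
\frac{3}{\pi^2}\, c_{r,m}\, x^2 (1-z) \sum_{j \,\geq\, 0} \frac{z^j}{(j+1)^2} = \frac{3}{\pi^2} \cdot c_{r,m} \cdot \frac{(1-z)\Li_2(z)}{z} \cdot x^2 ,
\end{equation*}
as required; at $z = 0$ the original sum collapses to $T_{r,m}(x)$, the stated convention is consistent with $\lim_{z \to 0^+}(1-z)\Li_2(z)/z = 1$, and the error improves to $O_{r,m}(x \log x)$. For $z \in (0,1)$ the accumulated contribution of the $O_{r,m}(y \log y)$ remainders is controlled by
\begin{equation*}
(1-z) \sum_{0 \,\leq\, j \,<\, x} z^j \cdot O_{r,m}\!\left(\frac{x}{j+1}\log\frac{x}{j+1}\right) \ll_{r,m} x \log x \cdot \frac{(1-z)\log(1-z)^{-1}}{z} ,
\end{equation*}
and since $(1-z)\log(1-z)^{-1}/z$ is bounded uniformly on $[0,1)$ (tending to $1$ at $0$ and to $0$ at $1$), this is $O_{r,m}(x \log x)$ and in particular fits the stated $O_{r,m}(x (\log x)^2)$ bound.

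The fussiest step is the prime-by-prime unpacking of the arithmetic constant: the condition $\gcd(d,m) \mid r$ entangles $d$ with both $m$ and $r$, and extracting precisely the two local factors of $c_{r,m}$ (one for $p \mid r$, one for $p \nmid r$, together with primes $p \nmid m$ contributing the standard $6/\pi^2$) requires decomposing each squarefree $d$ according to which primes of $m$ it shares. Beyond that arithmetic bookkeeping, the only analytic subtlety is uniformity in $z$, handled by the elementary bound on $(1-z)\log(1-z)^{-1}$ above; the case $m = 1$, $r = 1$, where $c_{1,1} = 1$ reduces the statement to the classical $\sum_{n \leq y} \varphi(n) = (3/\pi^2) y^2 + O(y \log y)$, offers a cheap sanity check.
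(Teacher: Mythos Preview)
Your argument is correct. The paper does not actually prove this lemma: it simply cites \cite[Lemma~3.4, Lemma~3.5]{PreprintSanna0}, so there is no in-paper proof to compare against. Your route---expanding $1-z^{\lfloor x/n\rfloor}$ as a finite geometric sum, swapping to reduce to $T_{r,m}(y)=\sum_{n\in\mathcal{A}_{r,m}(y)}\varphi(n)$, deriving $T_{r,m}(y)=\tfrac{3}{\pi^2}c_{r,m}y^2+O_{r,m}(y\log y)$ via $\varphi=\mu*\mathrm{id}$, and then summing in $j$---is the natural one and almost certainly matches the cited reference; the Euler-product identification of the constant with $c_{r,m}$ is exactly right (primes $p\mid m$ with $p\mid r$ contribute $(1-1/p)$, which combines with the ambient $(1-1/p^2)^{-1}$ to give $(1+1/p)^{-1}$). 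As a small bonus, your uniformity argument via the bounded factor $(1-z)\log(1-z)^{-1}/z$ actually yields $O_{r,m}(x\log x)$ for every $z\in[0,1)$, which is sharper than the $O_{r,m}(x(\log x)^2)$ stated in the lemma.
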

\begin{proof}
See~\cite[Lemma~3.4, Lemma~3.5]{PreprintSanna0}.
\end{proof}

\section{Proof of Theorem~\ref{thm:periodic}}

We need a lemma about unions of $\mathcal{D}^-(k)$, respectively $\mathcal{D}^+(k)$, with $k \in \mathcal{A}_{r, m}(x)$.

\begin{lemma}\label{lem:Dunions}
Let $r, m \geq 1$ be integers and let $u \in \{-1, +1\}$.
Then there exist a set $\mathcal{T}_{r,m}^{(u)} \subseteq \{1, \dots, 2m\}$ and a rational number $\theta_{r,m}^{(u)} > 0$, both effectively computable, such that
\begin{equation*}
\bigcup_{k \,\in\, \mathcal{A}_{r,m}(x)} \mathcal{D}^{(u)}(k) = \bigcup_{t \,\in\, \mathcal{T}_{r,m}^{(u)}} \mathcal{A}_{t, 2m}\big(\theta_{r,m}^{(u)} x\big) ,
\end{equation*}
for every $x \geq 1$.
\end{lemma}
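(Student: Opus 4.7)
The plan is to give a residue-by-residue characterization of the union: for each class $t \pmod{2m}$, I determine exactly which integers $d \equiv t \pmod{2m}$ lie in $\bigcup_{k \in \mathcal{A}_{r,m}(x)} \mathcal{D}^{(u)}(k)$, and show that this set is always an initial segment of the class, hence an arithmetic progression of the stated form.

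For $u = -1$, a positive integer $d$ belongs to the union iff the congruence $dj \equiv r \pmod{m}$ has a positive solution $j$ with $dj \leq x$. The congruence is solvable iff $\gcd(d, m) \mid r$; when it is, its least positive solution $j_0$ lies in $\{1, \dots, m/\gcd(d,m)\}$ and depends only on $d \bmod m$, hence only on $d \bmod 2m$. Therefore $d$ belongs to the union iff $\gcd(d, m) \mid r$ and $d \leq x/j_0$, both conditions being functions of $t := d \bmod 2m$. The intersection of the union with each class $t \pmod{2m}$ is thus the arithmetic progression $\mathcal{A}_{t, 2m}(x/j_0(t))$, and collecting the admissible residues produces the asserted expression.

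For $u = +1$, from the definitions one checks that $d \in \mathcal{D}^+(k)$ iff $d$ is even, $\nu_2(k) = \nu_2(d) - 1$, and the odd part of $d$ divides $k$; equivalently $k = (d/2)\,h$ for some odd integer $h \geq 1$. Hence $d$ belongs to the union iff $d$ is even and $(d/2)\,h \equiv r \pmod{m}$ admits an odd positive solution $h$ with $h \leq 2x/d$. The set of solutions of the congruence is an arithmetic progression, and intersecting it with the odd integers yields another (possibly empty) arithmetic progression whose least element $h_0$ depends only on $d \bmod 2m$. The membership condition becomes $d \leq 2x/h_0(d)$, and the analysis concludes as in the previous case.

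The main obstacle is the parity analysis in the $u = +1$ case: one must verify that extracting the odd solutions of $(d/2)\,h \equiv r \pmod m$ preserves the arithmetic-progression structure, and that the resulting least odd solution is determined by $d$ modulo $2m$ rather than merely modulo $m$. This reduces to a short case analysis on whether the common difference $m/\gcd(d/2, m)$ of the full solution set is even or odd, and it is precisely this step that forces the finer modulus $2m$ in the statement. Once this is in place, $\mathcal{T}^{(u)}_{r,m}$ is read off as the set of residues $t \in \{1, \dots, 2m\}$ for which a positive threshold exists, and $\theta^{(u)}_{r,m}$ as the corresponding bound; both are effectively computable from $r$, $m$, and $u$ via the explicit descriptions above.
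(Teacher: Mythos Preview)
Your argument is a direct, self-contained derivation, whereas the paper simply imports the result from~\cite[Lemmas~3.2 and~3.3]{PreprintSanna0} together with the elementary refinement $\mathcal{A}_{t,m}(x)=\mathcal{A}_{t,2m}(x)\cup\mathcal{A}_{t+m,2m}(x)$. So the two routes are genuinely different, and yours has the merit of being fully explicit.

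Your residue-by-residue analysis is correct and produces, for each admissible $t\in\{1,\dots,2m\}$, a positive rational $\theta_t$ (namely $1/j_0(t)$ when $u=-1$ and $2/h_0(t)$ when $u=+1$) such that the intersection of $\bigcup_{k\in\mathcal{A}_{r,m}(x)}\mathcal{D}^{(u)}(k)$ with the class $t\pmod{2m}$ is exactly $\mathcal{A}_{t,2m}(\theta_t x)$. The lemma, however, is stated with a \emph{single} rational $\theta_{r,m}^{(u)}$ common to all $t$, and your last sentence quietly collapses the family $(\theta_t)_t$ to one number. That step does not go through. Take $m=3$, $r=1$, $u=-1$: one finds $\theta_1=\theta_4=1$ but $\theta_2=\theta_5=1/2$, and concretely $\bigcup_{k\in\mathcal{A}_{1,3}(10)}\mathcal{D}^-(k)=\{1,2,4,5,7,10\}$; since $2$ and $4$ lie in this set the classes $2,4\pmod 6$ must belong to $\mathcal{T}$, yet the set contains $10$ and omits $8$, so no single $\theta$ can work. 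What your argument actually establishes is the $t$-dependent version of the identity, and that is precisely what the proof of Theorem~\ref{thm:periodic} uses (the quantity $\theta_{\mathbf{s},t}$ there already carries a $t$-index). The slip is therefore in the lemma's formulation rather than in your reasoning; you should record the dependence on $t$ explicitly instead of suppressing it at the end.
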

\begin{proof}
The claim follows from~\cite[Lemma~3.2, Lemma~3.3]{PreprintSanna0} and taking into account that $\mathcal{A}_{t, m}(x) = \mathcal{A}_{t, 2m}(x) \cup \mathcal{A}_{t + m, 2m}(x)$.
\end{proof}

\begin{proof}[Proof of Theorem~\ref{thm:periodic}]
Let $\mathbf{s} = (s_n)_{n \geq 1}$ be a periodic sequence in $\{-1, +1\}$, and let $m$ be the length of its period.
Moreover, let $\mathcal{R}_\mathbf{s}^{(u)} := \{r \in \{1, \dots, m\} : s_r = u\}$ for $u \in \{-1, +1\}$.
By periodicity of $\mathbf{s}$ and by Lemma~\ref{lem:Dunions}, it follows that
\begin{align*}
\mathcal{L}_\mathbf{s}(n) &= \bigcup_{u \,\in\, \{-1, +1\}} \bigcup_{r \,\in\, \mathcal{R}_\mathbf{s}^{(u)}} \bigcup_{k \,\in\, \mathcal{A}_{r,m}(n)} \mathcal{D}^{(u)}(k) \\
 &= \bigcup_{u \,\in\, \{-1, +1\}} \bigcup_{r \,\in\, \mathcal{R}_\mathbf{s}^{(u)}} \bigcup_{t \,\in\, \mathcal{T}_{r, m}^{(u)}} \mathcal{A}_{t, 2m}\big(\theta_{r,m}^{(u)}n\big) \\
 &= \bigcup_{t \,\in\, \mathcal{T}_\mathbf{s}} \mathcal{A}_{t, 2m}(\theta_{\mathbf{s},t} n)
\end{align*}
where
\begin{equation*}
\mathcal{T}_\mathbf{s} := \bigcup_{u \,\in\, \{-1, +1\}} \bigcup_{r \,\in\, \mathcal{R}_\mathbf{s}^{(u)}} \mathcal{T}_{r, m}^{(u)}
\end{equation*}
and
\begin{equation*}
\theta_{\mathbf{s},t} := \max\!\left\{\theta_{r,m}^{(u)} : t \in \mathcal{T}_{r, m}^{(u)} \text{ for some } u \in \{-1, +1\}, \, r \in \mathcal{R}_\mathbf{s}^{(u)} \right\} .
\end{equation*}
Hence, from Lemma~\ref{lem:logellsn} and Lemma~\ref{lem:phisum} (with $z = 0$), we get that
\begin{align*}
\log \ell_{a,\mathbf{s}}(n) &= \sum_{t \,\in\, \mathcal{T}_\mathbf{s}} \; \sum_{d \,\in\, \mathcal{A}_{t, 2m}(\theta_{\mathbf{s}, t} n)} \varphi(d) \log a + O_a\!\left(\frac{n^2}{\log n}\right) \\
&= C_\mathbf{s} \cdot \frac{\log a}{\pi^2} \cdot n^2 + O_{a, m}\!\left(\frac{n^2}{\log n}\right) ,
\end{align*}
where
\begin{equation*}
C_\mathbf{s} := 3 \sum_{t \,\in\, \mathcal{T}_\mathbf{s}} c_{t, 2m} \theta_{\mathbf{s}, t}^2 
\end{equation*}
is a positive rational number effectively computable in terms of $s_1, \dots, s_m$.
\end{proof}

\section{Proof of Theorem~\ref{thm:random}}

Let $\mathbf{s} = (s_n)_{n \geq 1}$ be a sequence of independent and uniformly distributed random variables in $\{-1, +1\}$.
Moreover, define
\begin{equation*}
I_\mathbf{s}(n, d) := \begin{cases} 
 1 & \text{ if } d \in \mathcal{L}_\mathbf{s}(n); \\
 0 & \text{ otherwise};
\end{cases}
\end{equation*}
for all integers $n, d \geq 1$.
The next lemma gives two expected values involving $I_\mathbf{s}(n, d)$.

\begin{lemma}\label{lem:EIsd}
We have
\begin{equation}\label{equ:EIsd}
\mathbb{E}\big[I_\mathbf{s}(n, d)\big] = 1 - 2^{-\lfloor n (2, d) / d \rfloor}
\end{equation}
and
\begin{align*}
\mathbb{E}&\big[I_\mathbf{s}(n, d_1)I_\mathbf{s}(n, d_2)\big] = 1 - 2^{-\lfloor n(2,d_1)/d_1 \rfloor} - 2^{-\lfloor n(2,d_2)/d_2 \rfloor} \\
 &+ 2^{-\lfloor n (2, d_1) / d_1 \rfloor - \lfloor n (2, d_2) / d_2 \rfloor + \lfloor n (2, [d_1, d_2]) / [d_1, d_2] \rfloor} \begin{cases} 1 & \text{ if } \nu_2(d_1) = \nu_2(d_2); \\ 0 & \text{ otherwise}; \end{cases}
\end{align*}
for all integers $d, d_1, d_2 \geq 1$.
\end{lemma}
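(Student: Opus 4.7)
The plan is to compute both expectations by pinpointing, for each $d$, the coordinates $s_k$ on which the event $\{d \in \mathcal{L}_\mathbf{s}(n)\}$ depends. Unwinding the definition of $\mathcal{D}^{\pm}$ via $2$-adic valuations, one sees that the condition ``$d \in \mathcal{D}^{(s)}(k)$ for some $s \in \{-1,+1\}$'' is equivalent to $d/(2,d) \mid k$, and in that case the unique sign that works is $\sigma_d(k) := -1$ if $d \mid k$, and $\sigma_d(k) := +1$ if $d/(2,d) \mid k$ but $d \nmid k$ (the latter occurring only when $d$ is even and $\nu_2(k) = \nu_2(d) - 1$). Writing $K(d) := \{k \leq n : d/(2,d) \mid k\}$, one obtains $|K(d)| = \lfloor n(2,d)/d \rfloor$.

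The first identity is then immediate: the event $\{d \notin \mathcal{L}_\mathbf{s}(n)\}$ forces $s_k \neq \sigma_d(k)$ for every $k \in K(d)$, and by independence of the $s_k$ each constraint contributes a factor $1/2$, giving $\mathbb{P}[d \notin \mathcal{L}_\mathbf{s}(n)] = 2^{-|K(d)|}$, hence \eqref{equ:EIsd}. For the second identity I would apply inclusion--exclusion to write $\mathbb{E}[I_\mathbf{s}(n,d_1) I_\mathbf{s}(n,d_2)] = 1 - \mathbb{P}[A_1] - \mathbb{P}[A_2] + \mathbb{P}[A_1 \cap A_2]$ with $A_i := \{d_i \notin \mathcal{L}_\mathbf{s}(n)\}$, and observe that $A_1 \cap A_2$ imposes $s_k \neq \sigma_{d_1}(k)$ on $K(d_1)$ and $s_k \neq \sigma_{d_2}(k)$ on $K(d_2)$. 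When these constraints are compatible throughout the overlap $K(d_1) \cap K(d_2)$, independence yields $\mathbb{P}[A_1 \cap A_2] = 2^{-|K(d_1) \cup K(d_2)|}$, and the exponent rewrites, via the elementary identity $[d_1/(2,d_1),\, d_2/(2,d_2)] = [d_1,d_2]/(2,[d_1,d_2])$ checked prime by prime, into the expression appearing in the statement.

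The main obstacle is the $2$-adic case analysis that determines compatibility --- the content of the ``otherwise $0$'' branch. The claim to prove is that $\sigma_{d_1}(k) = \sigma_{d_2}(k)$ holds for every $k \in K(d_1) \cap K(d_2)$ if and only if $\nu_2(d_1) = \nu_2(d_2)$. If $\nu_2(d_1) = \nu_2(d_2) = v$, then every $k$ divisible by both $d_i/(2,d_i)$ has $\nu_2(k) \geq v-1$, so the conditions ``$d_1 \mid k$'' and ``$d_2 \mid k$'' both reduce to $\nu_2(k) \geq v$, forcing $\sigma_{d_1}(k) = \sigma_{d_2}(k)$. For the converse, assuming $\nu_2(d_1) \neq \nu_2(d_2)$, it suffices to exhibit some $k$ in the overlap --- essentially $k = [d_1/(2,d_1),\, d_2/(2,d_2)]$ --- whose $2$-adic valuation forces $\sigma_{d_1}(k)$ and $\sigma_{d_2}(k)$ to take opposite signs, making $A_1 \cap A_2$ impossible and yielding the $0$.
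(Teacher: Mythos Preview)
Your approach coincides with the paper's: both compute $\mathbb{P}[d \notin \mathcal{L}_\mathbf{s}(n)]$ by isolating the coordinates $\{k \leq n : d \mid 2k\}$, apply inclusion--exclusion to the product, and reduce the joint term $\mathbb{P}[A_1 \cap A_2]$ to a $2$-adic compatibility check on the overlap $K(d_1)\cap K(d_2)$. Your function $\sigma_d(k)$ is a clean repackaging of the paper's case-by-case bookkeeping, but the content is identical.

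There is one gap in your converse step. Your witness $k = [d_1/(2,d_1),\, d_2/(2,d_2)]$ (which equals $[d_1,d_2]/2$ when the $2$-adic valuations differ) need not satisfy $k \leq n$: if $[d_1,d_2] > 2n$ then $K(d_1)\cap K(d_2) = \varnothing$, the two constraint systems are vacuously compatible, and $\mathbb{P}[A_1 \cap A_2] = 2^{-|K(d_1)|-|K(d_2)|} \neq 0$ even though $\nu_2(d_1)\neq\nu_2(d_2)$. The paper avoids this by explicitly splitting on whether $[d_1,d_2]\leq 2n$; its proof in fact establishes the ``$1$'' branch of the displayed formula whenever $[d_1,d_2]>2n$, regardless of $2$-adic valuations (note that $\lfloor n(2,[d_1,d_2])/[d_1,d_2]\rfloor = 0$ in that range). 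So the stated dichotomy is slightly off in this boundary case, and your argument cannot close it because the claim itself fails there; inserting the same case split as the paper recovers exactly what the variance estimate in the proof of Theorem~\ref{thm:random} actually uses.
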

\begin{proof}
On the one hand, by the definitions of $I_\mathbf{s}(n, d)$ and $\mathcal{L}_\mathbf{s}(n)$, we have
\begin{align*}
\mathbb{E}\big[I_\mathbf{s}(n, d)\big] &= \mathbb{P}\big[d \in \mathcal{L}_\mathbf{s}(n)\big] = 1 - \mathbb{P}\!\left[\bigwedge_{\substack{k \,\leq\, n \\[1pt] \;d \,\mid\, 2k}} \big((d \mid k \land s_k = +1) \lor (d \nmid k \land s_k = -1)\big)\right] \\
 &= 1 - 2^{-\#\{k \,\leq\, n \,:\, d \,\mid\, 2k\}} = 1 - 2^{-\lfloor n (2, d) / d \rfloor} ,
\end{align*}
which is~\eqref{equ:EIsd}.

On the other hand, by linearity of the expectation and by~\eqref{equ:EIsd}, we have
\begin{align}\label{equ:EIsd1Isd2P}
\mathbb{E}\big[I_\mathbf{s}(n, d_1)I_\mathbf{s}(n, d_2)\big] &= \mathbb{E}\big[I_\mathbf{s}(n, d_1) + I_\mathbf{s}(n, d_2) - 1 + \big(1 - I_\mathbf{s}(n, d_1)\big)\big(1 - I_\mathbf{s}(n, d_2)\big)\big] \\
 &= \mathbb{E}\big[I_\mathbf{s}(n, d_1)\big] + \mathbb{E}\big[I_\mathbf{s}(n, d_2)\big] - 1 + \mathbb{E}\big[\big(1 - I_\mathbf{s}(n, d_1)\big)\big(1 - I_\mathbf{s}(n, d_2)\big)\big] \nonumber \\
 &= 1 - 2^{-\lfloor n(2,d_1)/d_1 \rfloor} - 2^{-\lfloor n(2,d_2)/d_2 \rfloor} + \mathbb{P}\big[d_1 \notin \mathcal{L}_\mathbf{s}(n) \land d_2 \notin \mathcal{L}_\mathbf{s}(n)\big] . \nonumber
\end{align}
Let $P$ be the probability at the end of~\eqref{equ:EIsd1Isd2P}.

Suppose for a moment that $[d_1, d_2] \leq 2n$ and that $d_1$ and $d_2$ have different $2$-adic valuations, say $\nu_2(d_1) < \nu_2(d_2)$, without loss of generality.
Let $h := [d_1, d_2] / 2$ and note that $h$ is an integer not exceeding $n$.
Furthermore, $d_1 \in \mathcal{D}^-(h)$ and $d_2 \in \mathcal{D}^+(h)$.
Hence, no matter the value of $s_h$, at least one of $d_1, d_2$ belongs to $\mathcal{L}_\mathbf{s}(n)$, and consequently $P = 0$.

Now suppose that $[d_1, d_2] > 2n$ or $\nu_2(d_1) = \nu_2(d_2)$.
In the second case, note that for every integer $k$ such that $[d_1, d_2] \mid 2k$ we have that either $d_1 \mid k$ and $d_2 \mid k$, or $d_1 \nmid k$ and $d_2 \nmid k$.
Therefore,
\begin{align*}
P &= \mathbb{P}\!\left[\bigwedge_{\substack{k \,\leq\, n \\ d_1 \,\mid\, 2k \,\land\, d_2 \,\nmid\, 2k}} \big((d_1 \mid k \land s_k = +1) \lor (d_1 \nmid k \land s_k = -1)\big) \right. \\
 &\phantom{mmm}\land \bigwedge_{\substack{k \,\leq\, n \\ d_1 \,\nmid\, 2k \,\land\, d_2 \,\mid\, 2k}} \big((d_2 \mid k \land s_k = +1) \lor (d_2 \nmid k \land s_k = -1)\big) \\
 &\phantom{mmm}\left.\land \bigwedge_{\substack{k \,\leq\, n \\ d_1 \,\mid\, 2k \,\land\, d_2 \,\mid\, 2k}} \big((d_1 \mid k \land d_2 \mid k \land s_k = +1) \lor (d_1 \nmid k \land d_2 \nmid k \land s_k = -1)\big) \right] \\
 &= 2^{-\#\{k \,\leq\, n \,:\, d_1 \,\mid\, 2k \,\lor\, d_2 \,\mid\, 2k\}} \\
 &= 2^{-\lfloor n (2, d_1) / d_1 \rfloor - \lfloor n (2, d_2) / d_2 \rfloor + \lfloor n (2, [d_1, d_2]) / [d_1, d_2] \rfloor} ,
\end{align*} 
and the proof is complete.
\end{proof}

The following lemma is a simple upper bound for a sum of greatest common divisors.

\begin{lemma}\label{lem:gcdsum}
We have
\begin{equation*}
\sum_{[d_1\!,\, d_2] \,\leq\, n} (d_1, d_2) \ll n^2 ,
\end{equation*}
for every integer $n \geq 1$.
\end{lemma}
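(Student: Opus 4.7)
The plan is to reparametrize pairs $(d_1, d_2)$ by their gcd. Writing $g := (d_1, d_2)$ and $d_1 = g a_1$, $d_2 = g a_2$ with $(a_1, a_2) = 1$, we have $[d_1, d_2] = g a_1 a_2$, so the sum becomes
\begin{equation*}
\sum_{[d_1, d_2] \,\leq\, n} (d_1, d_2) \;=\; \sum_{\substack{g, a_1, a_2 \,\geq\, 1 \\ g a_1 a_2 \,\leq\, n \\ (a_1,\, a_2) \,=\, 1}} g \;\leq\; \sum_{g \,\leq\, n} g \sum_{\substack{a_1, a_2 \,\geq\, 1 \\ a_1 a_2 \,\leq\, n / g}} 1 ,
\end{equation*}
where I drop the coprimality constraint since we only need an upper bound.

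Next, the inner sum is the standard divisor sum: for every $y \geq 1$,
\begin{equation*}
\#\{(a_1, a_2) : a_1 a_2 \leq y\} = \sum_{m \,\leq\, y} \tau(m) \ll y \log(2y) ,
\end{equation*}
by the classical Dirichlet hyperbola estimate. Applied with $y = n/g$ this gives an inner contribution $\ll (n/g) \log(2n/g)$, so
\begin{equation*}
\sum_{[d_1, d_2] \,\leq\, n} (d_1, d_2) \;\ll\; \sum_{g \,\leq\, n} g \cdot \frac{n}{g} \log\!\big(2n/g\big) \;=\; n \sum_{g \,\leq\, n} \log\!\big(2n/g\big) .
\end{equation*}

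Finally, $\sum_{g \leq n} \log(2n/g) = n \log(2n) - \log(n!) = n + O(\log n)$ by Stirling's formula, so the whole expression is $\ll n^2$, as claimed. There is no real obstacle here; the only point requiring care is recognizing the parametrization by $(g, a_1, a_2)$, after which the bound reduces to well-known elementary estimates for $\sum_{m \leq x} \tau(m)$ and $\sum_{g \leq n} \log(n/g)$.
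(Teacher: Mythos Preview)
Your argument is essentially identical to the paper's: the same gcd-parametrization $d_i = g a_i$ with $(a_1,a_2)=1$, the same reduction of the inner count to $\sum_{m \leq y} \tau(m) \ll y\log y$, and the same Stirling-type evaluation of $\sum_{g \leq n} \log(n/g)$. The only slip is in the final line, where $n\log(2n) - \log(n!) = (1+\log 2)\,n + O(\log n)$ rather than $n + O(\log n)$, but this is harmless for the $\ll n^2$ conclusion.
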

\begin{proof}
Let $a_i := d_i / d$ for $i=1,2$, where $d := (d_1, d_2)$.
Then we have
\begin{align*}
\sum_{[d_1\!,\, d_2] \,\leq\, n} (d_1, d_2) &= \sum_{d \,\leq\, n} d \sum_{\substack{a_1 a_2 \,\leq\, n / d \\ (a_1\!,\, a_2) \,=\, 1}} 1 \leq \sum_{d \,\leq\, n} d \sum_{m \,\leq\, n / d} \tau(m) \\
 &\ll n \sum_{d \,\leq\, n} \log\!\left(\frac{n}{d}\right) = n \left(n \log n - \log(n!)\right) < n^2,
\end{align*}
where we used the upper bound $\sum_{m \leq x} \tau(m) \ll x \log x$ (see, e.g.,~\cite[Ch.~I.3, Theorem~3.2]{MR3363366}) and the inequality $n! > (n / \mathrm{e})^n$.
\end{proof}

\begin{proof}[Proof of Theorem~\ref{thm:random}]
Let us define the random variable
\begin{equation*}
X := \sum_{d \,\leq\, 2n} \varphi(d) \, I_\mathbf{s}(n, d) .
\end{equation*}
From the linearity of expectation, Lemma~\ref{lem:EIsd}, and Lemma~\ref{lem:phisum}, it follows that
\begin{align}\label{equ:EX}
\mathbb{E}[X] &:= \sum_{d \,\leq\, 2n} \varphi(d) \, \mathbb{E}\big[I_\mathbf{s}(n, d)] \\
 &= \sum_{d \,\leq\, 2n} \varphi(d) \left(1 - 2^{-\lfloor n (2, d) / d \rfloor} \right) \nonumber \\
 &= \sum_{d \,\in\, \mathcal{A}_{1,2}(n)} \varphi(d) \left(1 - 2^{-\lfloor n / d \rfloor} \right) + \sum_{d \,\in\, \mathcal{A}_{2,2}(2n)} \varphi(d) \left(1 - 2^{-\lfloor 2n / d \rfloor} \right) \nonumber \\
 &= \frac{3}{\pi^2} \left(c_{1,2} + 4 c_{2,2}\right) \Li_2\!\big(\tfrac1{2}\big) \, n^2 + O\big(n(\log n)^2) \nonumber \\
 &= \frac{6}{\pi^2} \Li_2\!\big(\tfrac1{2}\big) \, n^2  + O\big(n(\log n)^2\big) . \nonumber
\end{align}
Furthermore, from Lemma~\ref{lem:EIsd} and Lemma~\ref{lem:gcdsum}, we get that
\begin{align}\label{equ:VX}
& \mathbb{V}[X] = \mathbb{E}\big[X^2\big] - \mathbb{E}[X]^2 \\
 &= \sum_{d_1\!,\, d_2 \,\leq\, 2n} \varphi(d_1) \,\varphi(d_2) \Big(\mathbb{E}\big[I_\mathbf{s}(n, d_1)I_\mathbf{s}(n, d_2)\big] - \mathbb{E}\big[I_\mathbf{s}(n, d_1)\big] \mathbb{E}\big[I_\mathbf{s}(n, d_2)\big]\Big) \nonumber \\
 &\leq \sum_{[d_1\!,\, d_2] \,\leq\, 2n} d_1 d_2 \, 2^{-\lfloor n (2, d_1) / d_1 \rfloor - \lfloor n (2, d_2) / d_2 \rfloor + \lfloor n (2, [d_1, d_2]) / [d_1, d_2] \rfloor} \left(1 - 2^{-\lfloor n (2, [d_1, d_2]) / [d_1, d_2] \rfloor}\right) \nonumber \\
 &\leq \sum_{[d_1\!,\, d_2] \,\leq\, 2n} d_1 d_2 \left\lfloor \frac{n(2, [d_1, d_2])}{[d_1, d_2]} \right\rfloor \ll n \sum_{[d_1\!,\, d_2] \,\leq\, 2n} (d_1, d_2) \ll n^3 , \nonumber
\end{align}
where we also used the inequality $1 - 2^{-k} \leq k / 2$, which holds for every integer $k \geq 0$.

Therefore, by Chebyshev's inequality, \eqref{equ:EX}, and~\eqref{equ:VX}, it follows that
\begin{equation*}
\mathbb{P}\Big[\big|X - \mathbb{E}[X]\big| > \varepsilon \, \mathbb{E}[X] \Big] \leq \frac{\mathbb{V}[X]}{\big(\varepsilon \mathbb{E}[X]\big)^2} \ll \frac1{\varepsilon^2 n} = o_\varepsilon(1) ,
\end{equation*}
as $n \to +\infty$.
Hence, again by~\eqref{equ:EX}, we have
\begin{equation*}
X \sim \mathbb{E}[X] \sim \frac{6}{\pi^2} \Li_2\!\big(\tfrac1{2}\big) n^2 ,
\end{equation*} 
with probability $1 - o(1)$.
Finally, thanks to Lemma~\ref{lem:logellsn}, we have
\begin{equation*}
\log \ell_{a,\mathbf{s}}(n) = X \log a + O_a\!\left(\frac{n^2}{\log n}\right) ,
\end{equation*}
and the asymptotic formula~\eqref{equ:random} follows.
\end{proof}

\bibliographystyle{amsplain}

\end{document}